\newcommand{\Z}{\mathbb{Z}}
\newcommand{\C}{{\cal C}}
\newcommand{\D}{{\cal D}}
\newcommand{\pr}{\indent{\em Proof: \ }}
\newenvironment{demo}{\noindent {\pr}\ }{\qed \medskip}
\begin{document}

\title{A characterization of $\Z_2\Z_2[u]$-linear codes
\thanks{This work has been partially
supported by the Spanish MINECO grants TIN2016-77918-P and MTM2015-69138-REDT,
and by the Catalan AGAUR grant 2014SGR-691.}}

\titlerunning{A characterization of $\Z_2\Z_2[u]$-linear codes}

\author{Joaquim Borges}

\authorrunning{J. Borges}

\institute{Joaquim Borges is with the
Dept. of Information and Communications Engineering, Universitat Aut\`{o}noma de
Barcelona, Spain.
\email{jborges@deic.uab.cat}
}

\date{Received: date / Accepted: date}

\maketitle

\begin{abstract}
We prove that the class of $\Z_2\Z_2[u]$-linear codes is exactly the class of $\Z_2$-linear codes with automorphism group of even order. Using this characterization, we give examples of known codes, e.g. perfect codes, which has a nontrivial $\Z_2\Z_2[u]$ structure. We also exhibit an example of a $\Z_2$-linear code which is not $\Z_2\Z_2[u]$-linear. Also, we state that duality of $\Z_2\Z_2[u]$-linear codes is the same that duality of $\Z_2$-linear codes.

Finally, we prove that the class of $\Z_2\Z_4$-linear codes which are also $\Z_2$-linear is strictly contained in the class of $\Z_2\Z_2[u]$-linear codes.

\keywords{$\Z_2$-linear codes, ${\mathbb{Z}}_2{\mathbb{Z}}_4$-linear codes, $\Z_2\Z_2[u]$-linear codes}

\subclass{94B60 \and 94B25}
\end{abstract}

\section{Introduction}

$\Z_2\Z_4$-linear codes were first introduced in \cite{PuRi} as abelian translation-invariant propelinear codes. Later, in \cite{AddDual}, a comprehensive description of
$\Z_2\Z_4$-linear codes appeared. In \cite{AddDual}, the duality of such codes is studied, an appropriate inner product is defined and it is stated that the $\Z_2\Z_4$-dual code is not the same as the standard orthogonal code, that is, using the standard inner product of binary vectors. Any $\Z_2\Z_4$-linear code $C$ is a binary image of a $\Z_2\Z_4$-additive code $\C$, that is, an additive subgroup of $\Z_2^\alpha\times\Z_4^\beta$. We say that $C$ (and also $\C$) has parameters $(\alpha, \beta)$.

Recently, $\Z_2\Z_2[u]$-linear codes with parameters $(\alpha,\beta)$ have been introduced in \cite{AbSi}. They are binary images of $\Z_2\Z_2[u]$-additive codes, which are submodules of the ring $\Z_2^\alpha\times\Z_2[u]^\beta$. These codes have some similarities with $\Z_2\Z_4$-linear codes. However, there is a key difference: every $\Z_2\Z_2[u]$-linear code is also $\Z_2$-linear, which is not true, in general, for $\Z_2\Z_4$-linear codes.

The aim of this paper is to clarify the relation among all these classes. Specifically, we prove that a $\Z_2$-linear code is $\Z_2\Z_2[u]$-linear if and only if its automorphism group has even order. We also show that for a $\Z_2\Z_2[u]$-linear code, its $\Z_2\Z_2[u]$-dual code is exactly its $\Z_2$-dual code, that is, its standard binary dual code. This, in turn, implies directly that the dual weight distributions are related by MacWilliams identity. This fact was proved in \cite{AbSi}. By using these properties, we find $\Z_2\Z_2[u]$ structures for all binary linear perfect codes. In particular, for any binary linear perfect code $C$, we compute the possible values of $\alpha$ and $\beta$ such that $C$ is a $\Z_2\Z_2[u]$-linear code with parameters $(\alpha,\beta)$.

If $C$ is a $\Z_2\Z_4$-linear code with parameters $(\alpha,\beta)$, which is also $\Z_2$-linear, then we prove that $C$ has a $\Z_2\Z_2[u]$ structure with the same parameters $(\alpha,\beta)$.
In addition, we give an example showing that there are $\Z_2\Z_2[u]$-linear codes which are not $\Z_2\Z_4$-linear.

The paper is organized as follows. In the next section, we give basic
definitions and concepts. In Section \ref{Duality}, we prove that for a given $\Z_2\Z_2[u]$-linear code $C$, its $\Z_2\Z_2[u]$-dual code is exactly $C^\perp$, i.e. the standard binary orthogonal code. In Section \ref{characterization}, we study the conditions for a $\Z_2$-linear code to be $\Z_2\Z_2[u]$-linear. Moreover, we characterize $\Z_2\Z_2[u]$-linear codes as $\Z_2$-linear codes with automorphism group of even order. In Section \ref{perfect}, we prove that all $\Z_2$-linear perfect codes are $\Z_2\Z_2[u]$-linear wit parameters $(\alpha,\beta)$, where $\beta > 0$. In addition, we compute the possible values of $\alpha$ and $\beta$. In Section \ref{Z2Z4}, we analyze the relation to $\Z_2\Z_4$-linear codes. In particular, we prove that if $C$ is $\Z_2$-linear and $\Z_2\Z_4$-linear with parameters $(\alpha,\beta)$, then $C$ is also a $\Z_2\Z_2[u]$-linear code with the same parameters $(\alpha,\beta)$. We note that the reciprocal statement is not true. Finally, in Section \ref{conclusions}, we give some conclusions about the meaningful of $\Z_2\Z_2[u]$-linear codes and we point out some possible further research on the topic.

\section{Preliminaries}

Denote by ${\mathbb{Z}}_2$ and ${\mathbb{Z}}_4$ the rings of integers modulo 2
and modulo 4,
respectively. A binary code of length $n$ is any non-empty subset $C$ of
${\mathbb{Z}}_2^n$. If that
subset is a vector space then we say that it is a $\Z_2$-linear code (or binary linear code).

For any binary code $C$, an automorphism of $C$ is a coordinate permutation that leaves $C$ invariant. The automorphism group of $C$, denoted $Aut(C)$, is the group of all automorphisms of $C$.

Any
non-empty subset ${\cal C}$
of ${\mathbb{Z}}_4^n$ is a quaternary code of length $n$, and an additive
subgroup of ${\mathbb{Z}}_4^n$ is
called a quaternary linear code. The elements of a code are called
codewords.


The classical Gray map
$\phi:\;\Z_4\;\longrightarrow\;\Z_2^{2}$ is defined by
$$
\phi(0)=(0,0),\;\;\phi(1)=(0,1),\;\;\phi(2)=(1,1),\;\;\phi(3)=(1,0).
$$
If $a=(a_1,\ldots,a_m)\in\Z_4^m$, then the Gray map of $a$ is the coordinate-wise
extended map $\phi(a)=(\phi(a_1),\ldots,\phi(a_m))$. We naturally extend the
Gray map for vectors ${\bf u}=(u\mid  u')\in
{\mathbb{Z}}_2^\alpha\times{\mathbb{Z}}_4^\beta$ so that $\Phi({\bf u}) = (u\mid
\phi(u'))$.


\begin{definition}
A $\mathbb{Z}_2\mathbb{Z}_4$-additive code $\C$ with parameters $(\alpha,\beta)$ is an additive subgroup of
$\mathbb{Z}_2^\alpha\times\mathbb{Z}_4^\beta$.
\end{definition}

Such codes are extensively
studied in \cite{AddDual}.
Alternatively, we can define a $\mathbb{Z}_2\mathbb{Z}_4$-additive code as a $\Z_4$-submodule of $\mathbb{Z}_2^\alpha\times\mathbb{Z}_4^\beta$, where the scalar product $\lambda {\bf x}$ is defined as ${\bf x}+\cdots +{\bf x}$, $\lambda$ times (of course, if $\lambda=0$, then $\lambda {\bf x}=0$), for $\lambda\in\Z_4$, ${\bf x}\in \mathbb{Z}_2^\alpha\times\mathbb{Z}_4^\beta$.

If $\C$ is a $\mathbb{Z}_2\mathbb{Z}_4$-additive code with parameters $(\alpha,\beta)$, then the binary image $C=\Phi(\C)$ is called a $\mathbb{Z}_2\mathbb{Z}_4$-linear code with parameters $(\alpha,\beta)$. Note that $C$ is a binary code of length $n=\alpha+2\beta$, but $C$ is not $\Z_2$-linear, in general \cite{AddDual}. If $\alpha=0$, then $C$ is called a $\Z_4$-linear code. If $\beta=0$, then $C$ is simply a $\Z_2$-linear code.

%


The standard inner product in $\mathbb{Z}_2^\alpha\times\mathbb{Z}_4^\beta$,
defined in \cite{AddDual}, can be written as
$$\textbf{u}\cdot \textbf{v} =
2\left(\sum_{i=1}^{\alpha}u_iv_i\right)+\sum_{j=1}^{\beta}u'_jv'_j\in
\mathbb{Z}_4,$$
where the computations are made taking the zeros and ones in the $\alpha$ binary
coordinates as quaternary zeros and ones, respectively. The $\Z_2\Z_4$-dual code of
a $\mathbb{Z}_2\mathbb{Z}_4$-additive code ${\cal C}$ is defined in the
standard way by
$${\cal C}^\perp=\{\textbf{v} \in \mathbb{Z}_2^\alpha\times\mathbb{Z}_4^\beta
\mid \textbf{u}\cdot\textbf{v}=0, \mbox{ for all }\textbf{u}\in{\cal C}\}.$$

%

\bigskip

Consider the ring $\Z_2[u]=\Z_2 + u\Z_2=\{0,1,u,1+u\}$, where $u^2=0$. Note that $(\Z_2[u],+)$ is group-isomorphic to the Klein group $(\Z_2^2,+)$. But with the product operation, $(\Z_2[u],\cdot )$ is monoid-isomorphic to $(\Z_4,\cdot)$. Define the map $\pi:\;\Z_2[u]\;\longrightarrow\;\Z_2$, such that $\pi(0)=\pi(u)=0$ and $\pi(1)=\pi(1+u)=1$. Then, for $\lambda\in\Z_2[u]$ and ${\bf x}=(x_1,\ldots,x_\alpha\mid x'_1,\ldots,x'_\beta)\in \Z_2^\alpha\times\Z_2[u]^\beta$, we can consider the scalar product
$$
\lambda {\bf x}= (\pi(\lambda)x_1,\ldots,\pi(\lambda)x_\alpha\mid \lambda x'_1,\ldots,\lambda x'_\beta)\in \Z_2^\alpha\times\Z_2[u]^\beta.
$$
With this operation, $\Z_2^\alpha\times\Z_2[u]^\beta$ is a $\Z_2[u]$-module. Note that, a $\Z_2[u]$-submodule of $\Z_2^\alpha\times\Z_2[u]^\beta$ is not the same as a subgroup of $\Z_2^\alpha\times\Z_2[u]^\beta$.

\begin{definition}[\cite{AbSi}]
A $\Z_2\Z_2[u]$-additive code $\C$ with parameters $(\alpha,\beta)$ is a $\Z_2[u]$-submodule of $\mathbb{Z}_2^\alpha\times\mathbb{Z}_2[u]^\beta$.
\end{definition}

The following straightforward equivalence can be used as an alternative definition.

\begin{lemma}\label{equiv}
A code $\C\subseteq \mathbb{Z}_2^\alpha\times\mathbb{Z}_2[u]^\beta$ is $\Z_2\Z_2[u]$-additive if and only if
\begin{eqnarray*}\label{definicio}
&&u {\bf z}\in\C\;\;\;\forall {\bf z}\in\C,\mbox{ and }\\
&&{\bf x}+{\bf y}\in\C\;\;\;\forall {\bf x}, {\bf y}\in \C.
\end{eqnarray*}
\end{lemma}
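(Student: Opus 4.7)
The lemma is a basic submodule-closure check, so my plan is to treat the two directions separately and exploit the fact that the ambient additive group $\Z_2^\alpha\times\Z_2[u]^\beta$ has characteristic $2$.

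For the forward direction, if $\C$ is a $\Z_2[u]$-submodule then by definition it is closed under addition and under the scalar action by every element of $\Z_2[u]$; in particular, taking scalar $u$ gives the first displayed condition and closure under addition gives the second. No computation is needed here.

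For the converse, assume both stated conditions. I need to verify closure under scalar multiplication by each element of $\Z_2[u]=\{0,1,u,1+u\}$. First, because $(\Z_2[u],+)$ is the Klein group, every element of $\Z_2^\alpha\times\Z_2[u]^\beta$ has additive order dividing $2$; hence for any ${\bf z}\in\C$ the closure under addition forces ${\bf z}+{\bf z}={\bf 0}\in\C$, which gives $0\cdot{\bf z}={\bf 0}\in\C$. The scalar $1$ is immediate, and the scalar $u$ is the first hypothesis. For $\lambda=1+u$, the explicit formula for scalar multiplication together with $\pi(1+u)=1$ and $\pi(u)=0$ yields $(1+u){\bf x}={\bf x}+u{\bf x}$ coordinate-wise, so closure follows by combining the first hypothesis with closure under addition.

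There is really no obstacle here: the only substantive observation is that characteristic $2$ promotes the closure under $u$-multiplication and under addition to closure under the full scalar ring, since $0$ is handled by ${\bf z}+{\bf z}={\bf 0}$ and $1+u$ by the identity $(1+u){\bf x}={\bf x}+u{\bf x}$. I would present the argument as a short displayed verification of these two identities, and then conclude that the subset is a $\Z_2[u]$-submodule, which by definition means it is $\Z_2\Z_2[u]$-additive.
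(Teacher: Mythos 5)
Your proof is correct and is exactly the intended argument: the paper gives no proof at all (it simply labels the equivalence ``straightforward''), and your verification --- handling $0$ via ${\bf z}+{\bf z}={\bf 0}$ in characteristic $2$ and $1+u$ via the identity $(1+u){\bf x}={\bf x}+u{\bf x}$ --- supplies precisely the details left implicit, with the latter identity being the same one the paper itself invokes later in the proof of Proposition~\ref{involucio}. No gaps; nothing to change.
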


As for $\mathbb{Z}_2^\alpha\times\mathbb{Z}_4^\beta$, we can also define a Gray-like map. Let
$\psi:\;\Z_2[u]\;\longrightarrow\;\Z_2^{2}$ be defined as
$$
\psi(0)=(0,0),\;\;\psi(1)=(0,1),\;\;\psi(u)=(1,1),\;\;\psi(1+u)=(1,0).
$$
If $a=(a_1,\ldots,a_m)\in\Z_2[u]^m$, then the  coordinate-wise extension of $\psi$ is
$\psi(a)=(\psi(a_1),\ldots,\psi(a_m))$. Now, we define the
Gray-like map for elements ${\bf u}=(u\mid  u')\in
{\mathbb{Z}}_2^\alpha\times{\mathbb{Z}}_2[u]^\beta$ so that $\Psi({\bf u}) = (u\mid
\psi(u'))$.

If $\C$ is a $\mathbb{Z}_2\mathbb{Z}_2[u]$-additive code with parameters $(\alpha,\beta)$, then the binary image $C=\Psi(\C)$ is called a $\mathbb{Z}_2\mathbb{Z}_2[u]$-linear code with parameters $(\alpha,\beta)$. Note that, unlike for $\Z_2\Z_4$-linear codes, $C$ is a $\Z_2$-linear code of length $n=\alpha+2\beta$. This fact is clear since for any pair of elements ${\bf x},{\bf y}\in \mathbb{Z}_2^\alpha\times\mathbb{Z}_2[u]^\beta$, we have that
$\Psi({\bf x}) +\Psi({\bf y})=\Psi({\bf x} + {\bf y})$.

The inner product in $\mathbb{Z}_2^\alpha\times\mathbb{Z}_2[u]^\beta$,
defined in \cite{AbSi}, can be written as
$$\textbf{u}\cdot \textbf{v} =
u\left(\sum_{i=1}^{\alpha}u_iv_i\right)+\sum_{j=1}^{\beta}u'_jv'_j\in
\mathbb{Z}_2[u],$$
where the computations are made taking the zeros and ones in the $\alpha$ binary
coordinates as zeros and ones in $\Z_2[u]$, respectively. The $\mathbb{Z}_2\mathbb\Z_2[u]$-dual code of
a $\mathbb{Z}_2\mathbb\Z_2[u]$-additive code ${\cal C}$ is defined in the
standard way by
$${\cal C}^\perp=\{\textbf{v} \in \mathbb{Z}_2^\alpha\times\mathbb\Z_2[u]^\beta
\mid \textbf{u}\cdot\textbf{v}=0, \mbox{ for all }\textbf{u}\in{\cal C}\}.$$

\section{Duality of $\Z_2\Z_2[u]$-linear codes}\label{Duality}

It is readily verified that if $a,b\in\Z_2[u]$, then $\psi(a)\cdot \psi(b) =1$ if and only if $ab\in\{1,u\}$. This property can be easily generalized for elements in $\Z_2[u]^\beta$.

\begin{lemma}\label{tecnic}
If $x',y'\in\Z_2[u]^\beta$, then $\psi(x')\cdot \psi(y') =1$ if and only if $x'\cdot y'\in\{1,u\}$.
\end{lemma}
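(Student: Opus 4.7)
The plan is to reduce the multi-coordinate statement to the single-coordinate fact already stated (``$\psi(a)\cdot\psi(b)=1$ iff $ab\in\{1,u\}$'') by exploiting the additivity of the binary inner product in $\Z_2^{2\beta}$. Writing $x'=(x'_1,\ldots,x'_\beta)$ and $y'=(y'_1,\ldots,y'_\beta)$, the Gray-like images satisfy
\[
\psi(x')\cdot\psi(y') \;=\; \sum_{j=1}^\beta \psi(x'_j)\cdot\psi(y'_j) \pmod 2,
\]
and by the single-coordinate property each term $\psi(x'_j)\cdot\psi(y'_j)$ equals $1$ precisely when $x'_j y'_j\in\{1,u\}$, and $0$ otherwise.

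The core observation needed to finish is that the subset $\{1,u\}\subset\Z_2[u]$ is exactly the nontrivial coset of the additive subgroup $\{0,1+u\}$; equivalently, the assignment $a\mapsto [a\in\{1,u\}]$ (viewed in $\Z_2$) is the quotient homomorphism $\eta:(\Z_2[u],+)\to\Z_2$ with kernel $\{0,1+u\}$. In particular, this indicator is additive. Therefore
\[
\sum_{j=1}^\beta \psi(x'_j)\cdot\psi(y'_j) \;=\; \sum_{j=1}^\beta \eta(x'_j y'_j) \;=\; \eta\!\left(\sum_{j=1}^\beta x'_j y'_j\right) \;=\; \eta(x'\cdot y'),
\]
where all equalities take place in $\Z_2$. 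Combining the two displays gives $\psi(x')\cdot\psi(y')=\eta(x'\cdot y')$, which is $1$ if and only if $x'\cdot y'\in\{1,u\}$, proving both implications of the lemma at once.

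There is no substantial obstacle here; the only step that requires a moment's thought is recognizing that the indicator function of $\{1,u\}$ in $\Z_2[u]$ is an additive homomorphism, since this is what legitimizes pulling the sum inside. If one prefers to avoid naming $\eta$ explicitly, the argument can be rephrased as follows: both $\psi(x')\cdot\psi(y')$ and the membership of $x'\cdot y'$ in $\{1,u\}$ flip parity whenever one of the products $x'_j y'_j$ moves between the two cosets $\{0,1+u\}$ and $\{1,u\}$, so induction on $\beta$ using the single-coordinate fact yields the equivalence directly.
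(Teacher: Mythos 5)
Your proof is correct, and it packages the argument differently from the paper. The paper also reduces to the single-coordinate fact, but handles the summation combinatorially: it cancels pairs of equal addends among the products $x'_j y'_j$ (each element of $\Z_2[u]$ having additive order $2$) and then runs an exhaustive case analysis on the possible sets of remaining terms for each value of $x'\cdot y'$. You instead observe that the indicator of $\{1,u\}$ is precisely the quotient homomorphism $\eta:(\Z_2[u],+)\to\Z_2$ with kernel $\{0,1+u\}$, which lets you pull the sum through $\eta$ in one step and yields the cleaner identity $\psi(x')\cdot\psi(y')=\eta(x'\cdot y')$, of which the lemma is an immediate corollary. Your route is slightly more structural and arguably more illuminating (it explains \emph{why} the set $\{1,u\}$ is the right one: it is the nontrivial coset of $\{0,1+u\}$), and it avoids the case enumeration; the paper's version is more elementary in that it needs no observation about cosets, only direct cancellation. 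Both are complete proofs resting on the same verified single-coordinate computation.
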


\begin{proof}
 Each pair of equal addends in $x'\cdot y'$ gives $0$. Thus, we can omit all these pairs. The set of nonzero remaining terms is:
 \begin{itemize}
 \item[(i)] $\{1,u,1+u\}$ or $\emptyset$, if $x'\cdot y'=0$.
 \item[(ii)] $\{1\}$ or $\{u, 1+u\}$, if $x'\cdot y'=1$.
 \item[(iii)] $\{u\}$ or $\{1, 1+u\}$, if $x'\cdot y'=u$.
 \item[(iv)] $\{1+u\}$ or $\{1, u\}$, if $x'\cdot y'=1+u$.
 \end{itemize}
 Clearly, cases (i) and (iv) give $\psi(x')\cdot \psi(y')=0$, whereas cases (ii) and (iii) give $\psi(x')\cdot \psi(y')=1$.
\end{proof}

\begin{proposition}\label{producte}
Let ${\bf x},{\bf y}\in \Z_2^\alpha\times\Z_2[u]^\beta$.
\begin{itemize}
\item[(i)] If ${\bf x}\cdot {\bf y}=0$, then $\Psi ({\bf x})\cdot \Psi ({\bf y})=0$.
\item[(ii)] If ${\bf x}\cdot {\bf y}\neq 0$ and $\Psi ({\bf x})\cdot \Psi ({\bf y})=0$, then $\Psi ({\bf x})\cdot \Psi ((1+u){\bf y})=1$.
\end{itemize}
\end{proposition}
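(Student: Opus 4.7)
My plan is to reduce the statement to a finite case analysis by decomposing each inner product into a binary part and a $\Z_2[u]$ part, and then applying Lemma \ref{tecnic} twice.

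First I would write ${\bf x}=(x\mid x')$, ${\bf y}=(y\mid y')$, and set
$s=\sum_{i=1}^{\alpha}x_iy_i\in\Z_2$ and $t=x'\cdot y'\in\Z_2[u]$,
so that ${\bf x}\cdot{\bf y}=us+t$. Similarly, since $\Psi({\bf x})=(x\mid\psi(x'))$ and $\Psi({\bf y})=(y\mid\psi(y'))$, the standard binary inner product becomes $\Psi({\bf x})\cdot\Psi({\bf y})=s+\psi(x')\cdot\psi(y')$ in $\Z_2$. By Lemma \ref{tecnic}, $\psi(x')\cdot\psi(y')=1$ exactly when $t\in\{1,u\}$. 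Let me abbreviate this indicator as $\sigma(t)$, so $\Psi({\bf x})\cdot\Psi({\bf y})=s+\sigma(t)$.

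For part (i), I would solve $us+t=0$ in $\Z_2[u]$. The only solutions are $(s,t)=(0,0)$ and $(s,t)=(1,u)$. In the first, $s+\sigma(t)=0+0=0$; in the second, $s+\sigma(t)=1+1=0$. Both give $\Psi({\bf x})\cdot\Psi({\bf y})=0$.

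For part (ii), I would first compute $(1+u){\bf y}=(y\mid (1+u)y')$, using $\pi(1+u)=1$, so that
\[
\Psi({\bf x})\cdot\Psi((1+u){\bf y})=s+\sigma\bigl((1+u)t\bigr).
\]
A quick check of the four values of $t$ using $u^2=0$ shows $(1+u)\cdot 0=0$, $(1+u)\cdot 1=1+u$, $(1+u)\cdot u=u$, and $(1+u)(1+u)=1$; hence $\sigma((1+u)t)=1$ precisely when $t\in\{u,1+u\}$. Now I would run through the four values of $t$ under the two hypotheses $us+t\neq 0$ and $s+\sigma(t)=0$: the cases $t=0$ (forcing $s=0$, but then $us+t=0$) and $t=u$ (forcing $s=1$, but then $us+t=u+u=0$) are excluded; the only survivors are $(s,t)=(1,1)$, giving $s+\sigma(1+u)=1+0=1$, and $(s,t)=(0,1+u)$, giving $s+\sigma(1)=0+1=1$. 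This yields the desired equality.

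The computation is entirely mechanical once the decomposition $us+t$ is in place, so there is no real obstacle; the only point requiring minor care is verifying that $(1+u)t$ is computed in $\Z_2[u]$ (not via the map $\psi$) and that $\pi(1+u)=1$ is what makes the binary component of $(1+u){\bf y}$ equal to $y$ and not to $0$.
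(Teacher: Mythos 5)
Your proof is correct and follows essentially the same route as the paper: decompose both inner products into the binary part and the $\Z_2[u]$ part, invoke Lemma \ref{tecnic} to relate $\psi(x')\cdot\psi(y')$ to $x'\cdot y'$, and finish with a finite case check (your enumeration by the value of $t$ is just a reorganization of the paper's case split on $x\cdot y$). The explicit use of $x'\cdot((1+u)y')=(1+u)(x'\cdot y')$ is a slightly tidier packaging but not a different argument.
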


\begin{demo}
Let ${\bf x}=(x\mid x')$ and ${\bf y}=(y\mid y')$. We can write the inner product of ${\bf x}$ and ${\bf y}$ as ${\bf x}\cdot {\bf y} = u(x\cdot y) + (x'\cdot y')$.

\begin{itemize}
\item[(i)] If ${\bf x}\cdot {\bf y}=0$, then either (a) $x\cdot y=x' \cdot y'=0$, or (b) $x\cdot y =1$ and $x'\cdot y'=u$.
\begin{itemize}
\item[(a)] By Lemma \ref{tecnic}, we have that $\Psi(x')\cdot \Psi(y')=0$ and hence $\Psi ({\bf x})\cdot \Psi ({\bf y})=0$.
\item[(b)] Again, By Lemma \ref{tecnic}, we obtain $\Psi(x')\cdot \Psi(y')=1$ and then $\Psi ({\bf x})\cdot \Psi ({\bf y})=0$.
\end{itemize}

\item[(ii)] If ${\bf x}\cdot {\bf y}\neq 0$, then either (a) $x\cdot y=0$ and $x'\cdot y'\neq 0$, or (b) $x\cdot y=1$ and $x'\cdot y'\neq u$.
\begin{itemize}
\item[(a)] In this case $x'\cdot y'\in\{1,u,1+u\}$. But $x\cdot y=0$ and $\Psi ({\bf x})\cdot \Psi ({\bf y})=0$ imply that $\Psi(x')\cdot\Psi(y')=0$ and hence, by Lemma \ref{tecnic}, the only possible case is that $x'\cdot y'= 1+u$. Therefore, $x'\cdot ((1+u)y')=1$ and $\Psi(x')\cdot\Psi((1+u)y')=1$, again by Lemma \ref{tecnic}. Thus, $\Psi ({\bf x})\cdot \Psi ((1+u){\bf y})=1$.
\item[(b)] We have $x'\cdot y'\in\{0,1,1+u\}$.  Since $x\cdot y=1$ and $\Psi ({\bf x})\cdot \Psi ({\bf y})=0$, we obtain $\Psi(x')\cdot\Psi(y')=1$. By Lemma \ref{tecnic}, the only possibility is $x'\cdot y'=1$. Hence, $x'\cdot ((1+u)y')=1+u$ and $\Psi(x')\cdot\Psi((1+u)y')=0$. We conclude $\Psi ({\bf x})\cdot \Psi ((1+u){\bf y})=1$.
\end{itemize}
\end{itemize}
\end{demo}

\begin{corollary}\label{dualigual}
Let $\C$ be a $\Z_2\Z_2[u]$-additive code and let $C=\Psi(\C)$ be the corresponding binary $\Z_2\Z_2[u]$-linear code. Then, $\Psi(\C^\perp)=C^\perp$.
\end{corollary}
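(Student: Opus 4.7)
The plan is to combine the forward implication of Proposition \ref{producte}(i) with a counting argument, avoiding a head-on attempt at the converse inclusion via (ii). First I will show $\Psi(\C^\perp)\subseteq C^\perp$ directly from part (i), and then verify that both sides have the same cardinality, which will force equality.

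For the inclusion, I would take $\mathbf{v}\in\C^\perp$ and an arbitrary $\Psi(\mathbf{u})\in C = \Psi(\C)$. By definition $\mathbf{u}\cdot\mathbf{v}=0$, so Proposition \ref{producte}(i) yields $\Psi(\mathbf{u})\cdot\Psi(\mathbf{v})=0$. As $\mathbf{u}$ ranges over $\C$ this shows $\Psi(\mathbf{v})\in C^\perp$, giving $\Psi(\C^\perp)\subseteq C^\perp$.

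For the cardinality comparison, note that $\Psi$ is a bijection from $\Z_2^\alpha\times\Z_2[u]^\beta$ to $\Z_2^{\alpha+2\beta}$, so $|\Psi(\C^\perp)| = |\C^\perp|$ and $|C| = |\C|$. Standard binary duality gives $|C|\,|C^\perp| = 2^{\alpha+2\beta}$, and the analogous identity $|\C|\,|\C^\perp| = 2^{\alpha+2\beta}$ holds for the $\Z_2[u]$-valued pairing on $\Z_2^\alpha\times\Z_2[u]^\beta$. The latter rests on non-degeneracy, which is transparent: for any nonzero $\mathbf{x}=(x\mid x')$, either $\mathbf{x}\cdot(e_i\mid 0)=u\neq 0$ for some $i$ with $x_i=1$, or $\mathbf{x}\cdot(0\mid e_j)=x'_j\neq 0$ for some $j$ with $x'_j\neq 0$. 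Combining these, $|\Psi(\C^\perp)| = |\C^\perp| = 2^{\alpha+2\beta}/|\C| = 2^{\alpha+2\beta}/|C| = |C^\perp|$, and together with the inclusion from the previous step this forces $\Psi(\C^\perp) = C^\perp$.

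The main obstacle is the module-duality identity $|\C|\,|\C^\perp| = 2^{\alpha+2\beta}$: while non-degeneracy is immediate, deducing the exact cardinality equality requires the general duality statement for submodules of $\Z_2^\alpha\times\Z_2[u]^\beta$ under the $\Z_2[u]$-bilinear pairing, a standard structural fact over the local ring $\Z_2[u]$ but not entirely automatic. I would either cite it from \cite{AbSi} or derive it via the structure theorem for $\Z_2[u]$-modules (every such module is a sum of copies of $\Z_2[u]$ and $\Z_2 = \Z_2[u]/(u)$, each contributing the expected factor to $|\Hom_{\Z_2[u]}(\C,\Z_2[u])|$). Note finally that Proposition \ref{producte}(ii) alone does not yield the reverse inclusion directly: if $\mathbf{v}\notin\C^\perp$ but $\Psi(\mathbf{v})\in C^\perp$, part (ii) only produces $\Psi((1+u)\mathbf{v})\notin C^\perp$, which contradicts nothing unless one already knows $\Psi^{-1}(C^\perp)$ is closed under multiplication by the unit $1+u$; this is why the cardinality route is cleaner.
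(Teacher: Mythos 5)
Your proof is correct, but it takes a genuinely different route from the paper's. The paper proves both inclusions with no counting: the forward inclusion exactly as you do, and the reverse inclusion by contraposition, applying Proposition \ref{producte}(ii) with the codeword in the \emph{second} slot. That is, if ${\bf v}\notin\C^\perp$ there is ${\bf c}\in\C$ with ${\bf v}\cdot{\bf c}\neq 0$, and (ii) then gives $\Psi({\bf v})\cdot\Psi({\bf c})\neq 0$ or $\Psi({\bf v})\cdot\Psi\bigl((1+u){\bf c}\bigr)=1$; since $\C$ is a $\Z_2[u]$-submodule, $(1+u){\bf c}\in\C$, so in either case $\Psi({\bf v})$ fails orthogonality against an element of $C$ and $\Psi({\bf v})\notin C^\perp$. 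This is why your closing objection misses the mark: the element multiplied by $1+u$ is the codeword ${\bf c}$, not the candidate dual vector ${\bf v}$, so no closure property of $\Psi^{-1}(C^\perp)$ is needed and the direct route is not blocked. Your alternative is nevertheless sound: the inclusion from (i) plus the cardinality identity $|\C|\,|\C^\perp|=2^{\alpha+2\beta}$ (available in \cite{AbSi}, e.g.\ as a consequence of their MacWilliams identity, or derivable from the structure of finite $\Z_2[u]$-modules together with self-injectivity of $\Z_2[u]$) does force equality, and in fact only the inequality $|\C^\perp|\geq 2^{\alpha+2\beta}/|\C|$ is needed. What the paper's argument buys is self-containedness --- it uses nothing beyond Proposition \ref{producte} and the submodule property --- while yours buys independence from part (ii) at the price of importing a nontrivial duality count for the mixed-alphabet module.
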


\begin{demo}
If ${\bf x}\in \C^\perp$, then ${\bf x}\cdot {\bf c}=0$, for all ${\bf c}\in \C$. Hence, by Proposition \ref{producte}(i), we have that $\Psi({\bf x})\cdot \Psi ({\bf c})=0$, for all ${\bf c}\in \C$, implying that $\Psi({\bf x})\in C^\perp$. We have proved $\Psi(\C^\perp)\subseteq C^\perp$.

If ${\bf x}\notin \C^\perp$, then ${\bf x}\cdot {\bf c}\neq 0$, for some ${\bf c}\in \C$. Now, by Proposition \ref{producte}(ii), we have that $\Psi({\bf x})\cdot \Psi ({\bf c})\neq 0$ or $\Psi({\bf x})\cdot \Psi ((1+u){\bf c})\neq0$. It follows that $\Psi({\bf x})\notin C^\perp$ and therefore $C^\perp \subseteq \Psi(\C^\perp)$.
\end{demo}

Obviously, this immediately implies that the weight distributions of $\C$ and $\C^\perp$ are related by MacWilliams identity, as it was proved in \cite{AbSi}.

To finish this section, we prove that the dual of a $\Z_2\Z_2[u]$-linear code is also $\Z_2\Z_2[u]$-linear with the same parameters.

\begin{proposition}\label{dualZ2Zu}
A binary code $C$ is $\Z_2\Z_2[u]$-linear with parameters $(\alpha,\beta)$ if and only if $C^\perp$ is $\Z_2\Z_2[u]$-linear with the same parameters $(\alpha,\beta)$.
\end{proposition}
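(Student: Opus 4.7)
The plan is to reduce the statement to a purely module-theoretic fact about $\C^\perp$ and then exploit Corollary~\ref{dualigual}. Let $C=\Psi(\C)$ be $\Z_2\Z_2[u]$-linear with parameters $(\alpha,\beta)$. By Corollary~\ref{dualigual} we already know that $C^\perp=\Psi(\C^\perp)$. So if I can prove that $\C^\perp$ is itself a $\Z_2\Z_2[u]$-additive code (i.e.\ a $\Z_2[u]$-submodule of $\Z_2^\alpha\times\Z_2[u]^\beta$), then $C^\perp$ is automatically $\Z_2\Z_2[u]$-linear with the same parameters $(\alpha,\beta)$.

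To verify that $\C^\perp$ is $\Z_2\Z_2[u]$-additive, I would appeal to Lemma~\ref{equiv}: I only need to check closure under addition and under multiplication by $u$. Additive closure is immediate from bilinearity of the inner product. For the multiplication by $u$, write ${\bf x}=(x\mid x')\in\C^\perp$ and ${\bf c}=(c\mid c')\in\C$. Since $\pi(u)=0$, we have $u{\bf x}=(\mathbf 0\mid ux')$, so
$$(u{\bf x})\cdot{\bf c}=u\bigl(\mathbf 0\cdot c\bigr)+(ux')\cdot c'=u\,(x'\cdot c').$$
From ${\bf x}\cdot{\bf c}=u(x\cdot c)+x'\cdot c'=0$ we obtain $x'\cdot c'=u(x\cdot c)\in\{0,u\}$, and therefore $u(x'\cdot c')\in\{0,u^2\}=\{0\}$. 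Hence $u{\bf x}\in\C^\perp$, which is the required closure.

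This step—closure of $\C^\perp$ under scalar multiplication by $u$—is the only substantive point of the argument and I expect it to be the main obstacle: one has to notice that the apparent issue (the ``mixed'' contribution from the $\alpha$ binary coordinates) is precisely what forces $x'\cdot c'$ to lie in $\{0,u\}$, which is annihilated by $u$. Once this is in place, Corollary~\ref{dualigual} gives $C^\perp=\Psi(\C^\perp)$, so $C^\perp$ is $\Z_2\Z_2[u]$-linear with parameters $(\alpha,\beta)$.

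For the converse, I would simply invoke the forward direction applied to $C^\perp$: if $C^\perp$ is $\Z_2\Z_2[u]$-linear with parameters $(\alpha,\beta)$, then so is $(C^\perp)^\perp=C$, since the binary standard dual is an involution on binary linear codes. This closes the equivalence without any additional work.
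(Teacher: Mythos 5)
Your proposal is correct and follows essentially the same route as the paper: reduce to the forward direction via $(C^\perp)^\perp=C$, verify the two closure conditions of Lemma~\ref{equiv}, and obtain closure under multiplication by $u$ from the identity $(u{\bf x})\cdot{\bf c}=u({\bf x}\cdot{\bf c})$ together with Corollary~\ref{dualigual}. Your explicit coordinate computation showing $x'\cdot c'\in\{0,u\}$ is just an unpacked version of the paper's one-line use of that semilinearity identity, so there is no substantive difference.
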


\begin{demo}
Since $(C^\perp)^\perp=C$, it is enough to prove the `only if' part. Assume that $C$ is a $\Z_2\Z_2[u]$-linear code with parameters $(\alpha,\beta)$. Let $\C^\perp=\Psi^{-1}(C^\perp)$.
By linearity of $C^\perp$ and Lemma \ref{equiv}, we only need to proof that $u \Psi^{-1}(c)\in \C^\perp$, for all $c\in C^\perp$. For any codeword ${\bf x}\in \C$, we have $\left(u\Psi^{-1}(c)\right)\cdot {\bf x}=u\left(\Psi^{-1}(c)\cdot {\bf x}\right)=u0=0$, which implies $u\Psi^{-1}(c)\in \C^\perp$.
\end{demo}

\section{Characterization of $\Z_2\Z_2[u]$-linear codes}\label{characterization}

Given a $\Z_2$-linear code $C$ of length $n$, a natural question is if we can choose a set of $\beta$ pairs of coordinates such that $C$ is a $\Z_2\Z_2[u]$-linear code with parameters $(n-2\beta,\beta)$. The next lemma shows us that it is enough to answer the question for a generator matrix of $C$.

\begin{lemma}\label{generadors}
Let $S\subset {\mathbb{Z}}_2^\alpha\times{\mathbb{Z}}_2[u]^\beta$ and let $C$ be the $\Z_2$-linear code generated by the binary image vectors of $S$, $C=<\Psi(S)>$. Then, $C$ is a $\Z_2\Z_2[u]$-linear code with parameters $(\alpha,\beta)$ if and only if $\Psi(u {\bf x})\in C$, for all ${\bf x}\in S$.
\end{lemma}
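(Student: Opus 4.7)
The plan is to invoke Lemma \ref{equiv}, which reduces $\Z_2\Z_2[u]$-additivity to two conditions: closure under ordinary addition and closure under multiplication by $u$. Setting $\C=\Psi^{-1}(C)$, I aim to verify both properties for $\C$. Closure under addition is automatic: the identity $\Psi(\mathbf{x})+\Psi(\mathbf{y})=\Psi(\mathbf{x}+\mathbf{y})$ noted earlier in the paper, together with the $\Z_2$-linearity of $C$, immediately pulls back through the bijection $\Psi$ to give additive closure of $\C$. Thus the entire content of the lemma concerns closure under multiplication by~$u$.

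The ``only if'' direction is immediate. If $C$ is $\Z_2\Z_2[u]$-linear then $\C$ is by definition a $\Z_2[u]$-submodule, and from $\Psi(S)\subseteq C$ together with the bijectivity of $\Psi$ we obtain $S\subseteq\C$, so $u\mathbf{x}\in\C$ and hence $\Psi(u\mathbf{x})\in C$ for every $\mathbf{x}\in S$.

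For the ``if'' direction, assume $\Psi(u\mathbf{x})\in C$, equivalently $u\mathbf{x}\in\C$, for every $\mathbf{x}\in S$, and pick an arbitrary $\mathbf{z}\in\C$. Since $C=\langle\Psi(S)\rangle$, the codeword $\Psi(\mathbf{z})$ is a $\Z_2$-linear combination $\sum_i \Psi(\mathbf{s}_i)$ with $\mathbf{s}_i\in S$; by additivity of $\Psi$ this equals $\Psi\bigl(\sum_i \mathbf{s}_i\bigr)$, and injectivity of $\Psi$ forces $\mathbf{z}=\sum_i \mathbf{s}_i$ in $\Z_2^\alpha\times\Z_2[u]^\beta$. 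Because multiplication by $u$ distributes over the sum in the $\Z_2[u]$-module $\Z_2^\alpha\times\Z_2[u]^\beta$, we deduce $u\mathbf{z}=\sum_i u\mathbf{s}_i$. Each summand lies in $\C$ by hypothesis, and $\C$ is already closed under addition, so $u\mathbf{z}\in\C$. Applying Lemma \ref{equiv} then yields that $\C$ is $\Z_2\Z_2[u]$-additive, hence $C=\Psi(\C)$ is $\Z_2\Z_2[u]$-linear with parameters $(\alpha,\beta)$.

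I expect no substantive obstacle. The only bookkeeping point is keeping straight that sums of elements of $S$ computed in $\Z_2^\alpha\times\Z_2[u]^\beta$ correspond, via additivity of $\Psi$, to $\Z_2$-linear combinations of their binary images in $C$; once that identification is invoked, distributivity of the $u$-action over addition closes the argument in one line.
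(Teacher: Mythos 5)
Your proof is correct and follows essentially the same route as the paper's: reduce to Lemma \ref{equiv}, observe that additive closure is automatic from the $\Z_2$-linearity of $C$ and the additivity of $\Psi$, write an arbitrary $\mathbf{z}\in\C$ as a sum of elements of $S$, and distribute the $u$-action over that sum. Your version is merely a little more explicit about the ``only if'' direction and about why additive closure comes for free.
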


\begin{demo}
Let $\C=\Psi^{-1}(C)$. Then, $C$ is $\Z_2\Z_2[u]$-linear if and only if $\C$ is $\Z_2\Z_2[u]$-additive. Applying Lemma \ref{equiv}, we have that $\C$ is a $\Z_2\Z_2[u]$-additive code if and only $u{\bf z}\in\C$, for all ${\bf z}\in \C$. Let $w=\Psi({\bf z})$, thus $w=\Psi({\bf x_1})+ \cdots +\Psi({\bf x_r})$, for some ${\bf x_1}, \ldots , {\bf x_r}\in S$. Hence ${\bf z}={\bf x_1}+\cdots +{\bf x_r}$ and $u{\bf z}=u{\bf x_1}+\cdots +u{\bf x_r}$.
Therefore, $\C$ is $\Z_2\Z_2[u]$-additive if and only if $u{\bf x_i} \in \C$, for all $i=1,\ldots,r$.
\end{demo}

Now, we give a necessary and sufficient condition for a $\Z_2$-linear code to be $\Z_2\Z_2[u]$-linear.

\begin{proposition}\label{involucio}
Let $C$ be a $\Z_2$-linear code. Then, $C$ is a $\Z_2\Z_2[u]$-linear code with parameters $(\alpha, \beta)$, where $\beta > 0$, if and only if there exists an involution $\sigma\in Aut(C)$ fixing $\alpha$ coordinates.
\end{proposition}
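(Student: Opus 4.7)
The plan is to observe that the two scalar actions of $\Z_2[u]$ on $\Z_2^\alpha\times\Z_2[u]^\beta$ translate, via $\Psi$, into very concrete binary operations, and that this translation is what makes Lemma \ref{equiv} equivalent to the existence of the involution.

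First I would verify the following key translation. For a pair $({\bf x}) \in \Z_2^\alpha\times\Z_2[u]^\beta$, multiplication by $(1+u)$ fixes the binary part (since $\pi(1+u)=1$) and on each $\Z_2[u]$-coordinate it swaps $1\leftrightarrow 1+u$ and fixes $0$ and $u$. Under $\psi$, these are exactly the swaps $(0,1)\leftrightarrow(1,0)$ with $(0,0)$ and $(1,1)$ fixed. Hence if we let $\sigma$ denote the coordinate permutation that fixes the $\alpha$ binary coordinates and transposes the two coordinates within each of the $\beta$ pairs, then $\Psi((1+u){\bf x})=\sigma(\Psi({\bf x}))$. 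Combined with the identity $u{\bf x}={\bf x}+(1+u){\bf x}$ (which holds because $2{\bf x}={\bf 0}$), this yields the central formula
\[
\Psi(u{\bf x})=\Psi({\bf x})+\sigma(\Psi({\bf x})).
\]

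For the ``only if'' direction, assume $C=\Psi(\C)$ with $\C$ a $\Z_2\Z_2[u]$-additive code of parameters $(\alpha,\beta)$. Let $\sigma$ be the involution defined above (it fixes exactly the $\alpha$ binary coordinates). For any ${\bf x}\in\C$, $(1+u){\bf x}={\bf x}+u{\bf x}\in\C$ by $\Z_2[u]$-linearity, so $\sigma(\Psi({\bf x}))=\Psi((1+u){\bf x})\in C$. Since every codeword of $C$ is a $\Psi({\bf x})$, this gives $\sigma\in\mathrm{Aut}(C)$. Note $\beta>0$ forces $\sigma$ to be nontrivial (so a genuine involution).

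For the ``if'' direction, let $\sigma\in\mathrm{Aut}(C)$ be an involution fixing $\alpha$ coordinates; the remaining $n-\alpha$ non-fixed coordinates decompose into 2-cycles of $\sigma$, so $n-\alpha=2\beta$ and we obtain a canonical pairing. Use this pairing to define $\Psi$, and set $\C=\Psi^{-1}(C)$. By Lemma \ref{equiv} we need only check $u{\bf x}\in\C$ for every ${\bf x}\in\C$. Using the central formula, $\Psi(u{\bf x})=\Psi({\bf x})+\sigma(\Psi({\bf x}))$, and both terms lie in $C$ ($\sigma\in\mathrm{Aut}(C)$ and $C$ is $\Z_2$-linear), so $\Psi(u{\bf x})\in C$, giving $u{\bf x}\in\C$ as required.

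The only real obstacle is the initial translation step: one must correctly read off how $u$ and $1+u$ act through $\Psi$, and recognize the algebraic identity $u{\bf x}={\bf x}+(1+u){\bf x}$ that ties the multiplication-by-$u$ condition of Lemma \ref{equiv} to the existence of a coordinate-permutation involution. Once this bridge is in place, both implications reduce to one-line checks using that $C$ is $\Z_2$-linear.
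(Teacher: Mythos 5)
Your proposal is correct and follows essentially the same route as the paper: the identity $\Psi((1+u)\mathbf{x})=\sigma(\Psi(\mathbf{x}))$ for the pair-transposing involution $\sigma$, the algebraic bridge $u\mathbf{x}=\mathbf{x}+(1+u)\mathbf{x}$, and the reduction to Lemma \ref{equiv} are exactly the paper's argument. Your write-up is if anything slightly more explicit about why $\psi$ turns multiplication by $1+u$ into a coordinate swap.
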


\begin{demo}
Assume that $C$ is a $\Z_2\Z_2[u]$-linear code with $\beta > 0$. Let $\C=\Psi^{-1}(C)$ and for any codeword ${\bf x}=(x_1,\ldots,x_\alpha \mid x'_1,\ldots,x'_\beta)\in \C$, we write its binary image as
$x=(x_1,\ldots,x_\alpha\mid y_1,\ldots,y_{2\beta})$, where $\psi(x'_i)=(y_{2i-1},y_{2i})$, for $i=1,\ldots,\beta$. Let $\sigma$ be the involution that transposes $y_{2i-1}$ and $y_{2i}$, for all $i=1,\ldots,\beta$. Clearly, $\Psi\left((1+u){\bf x}\right)=\sigma(x)$. Since $(1+u){\bf x}\in \C$, we have that $\sigma\in Aut(C)$.

Reciprocally, if $\sigma\in Aut(C)$ has order $2$, then $\sigma$ is a product of disjoin transpositions. Considering the pairs of coordinates that $\sigma$ transposes as the images of $\Z_2[u]$ coordinates, we obtain that $\sigma(x)=\Psi\left((1+u)\Psi^{-1}(x)\right)$, for any codeword $x\in C$. Since $\sigma(x)\in C$, we have that $(1+u){\bf x}\in \C=\Psi^{-1}(C)$, for any ${\bf x}\in \C$. But this condition implies that $\C$ is a $\Z_2\Z_2[u]$-additive code since $(1+u){\bf x}={\bf x} + u{\bf x}$ and thus $u{\bf x}\in\C$. The result follows applying Lemma \ref{equiv}.
\end{demo}

\begin{corollary}\label{autgrup}
A $\Z_2$-linear code $C$ is $\Z_2\Z_2[u]$-linear with parameters $(\alpha, \beta)$, where $\beta > 0$, if and only if $Aut(C)$ has even order.
\end{corollary}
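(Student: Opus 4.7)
The plan is to derive this corollary as an essentially immediate consequence of Proposition \ref{involucio}, by reducing the statement about parameters $(\alpha,\beta)$ with $\beta>0$ to the equivalent group-theoretic statement: \emph{a finite group has even order if and only if it contains an element of order $2$.} Thus the real content is already packaged in Proposition \ref{involucio}; what remains is a standard group-theoretic translation.

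For the ``only if'' direction, assume $C$ is $\Z_2\Z_2[u]$-linear with parameters $(\alpha,\beta)$ and $\beta>0$. Proposition \ref{involucio} produces an involution $\sigma\in Aut(C)$. Then the cyclic subgroup $\langle\sigma\rangle$ has order $2$, and by Lagrange's theorem $2$ divides $|Aut(C)|$.

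For the ``if'' direction, assume $|Aut(C)|$ is even. By Cauchy's theorem applied at the prime $2$, there exists $\sigma\in Aut(C)$ of order exactly $2$; that is, a nontrivial involution. As a permutation of the $n$ coordinates of $C$, $\sigma$ decomposes as a product of disjoint transpositions and fixed points, fixing some number $\alpha\ge 0$ of coordinates and transposing $\beta$ pairs with $\alpha+2\beta=n$. Since $\sigma$ is not the identity, it must move at least two coordinates, so $\beta\ge 1$. Applying Proposition \ref{involucio} to this involution exhibits a $\Z_2\Z_2[u]$ structure on $C$ with parameters $(\alpha,\beta)$ and $\beta>0$.

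No serious obstacle is anticipated; the only point to watch is the strict inequality $\beta>0$, which holds automatically because any involution in $Aut(C)$ is by definition nontrivial, hence has at least one transposition. (If one wishes to avoid invoking Cauchy, the elementary pairing $g\leftrightarrow g^{-1}$ on $Aut(C)$ gives the same conclusion: the identity is self-paired, so even order forces another self-paired element, i.e.\ an involution.)
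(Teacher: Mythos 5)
Your proof is correct and takes essentially the same route as the paper, which disposes of the corollary in one line by observing that $Aut(C)$ has even order if and only if it contains an involution and then citing Proposition \ref{involucio}. Your version merely fills in the standard group-theoretic details (Lagrange and Cauchy, plus the observation that a nontrivial involution forces $\beta\ge 1$), all of which the paper leaves implicit.
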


\begin{demo}
$Aut(C)$ has even order if and only if $Aut(C)$ contains an involution. The statement then follows by Proposition \ref{involucio}.
\end{demo}

\begin{remark}
Note that Proposition \ref{involucio} and Corollary \ref{autgrup} applies also to $\Z_2\Z_4$-linear codes but only in one direction. I.e., if $C$ is a $\Z_2\Z_4$-linear code, then $Aut(C)$ has even order. But the converse is not true, in general.
\end{remark}

\begin{example}
Consider the code $C$ with generator matrix
$$
\left(
  \begin{array}{cccccccccccc}
    1 & 1 & 1 & 0 & 0 & 1 & 0 & 0 & 0 & 0 & 0 & 0 \\
    0 & 0 & 1 & 1 & 1 & 0 & 0 & 0 & 0 & 0 & 0 & 0 \\
    0 & 0 & 0 & 0 & 1 & 1 & 1 & 0 & 0 & 0 & 0 & 0 \\
    0 & 0 & 0 & 0 & 0 & 0 & 1 & 1 & 1 & 0 & 0 & 0 \\
    0 & 0 & 0 & 0 & 0 & 0 & 0 & 0 & 1 & 1 & 1 & 0 \\
    1 & 0 & 0 & 0 & 0 & 0 & 0 & 0 & 0 & 0 & 1 & 1 \\
  \end{array}
\right).
$$
As it is pointed out in \cite[Problem (32), p. 230]{MacW}, $Aut(C)$ is trivial, i.e. it only contains the identity permutation. Therefore, $C$ is not $\Z_2\Z_2[u]$-linear for $\beta > 0$.
\end{example}

In the next section we see several examples of well-known codes with a $\Z_2\Z_2[u]$ structure.

\section{$\Z_2\Z_2[u]$-linear perfect codes}\label{perfect}

A binary {\em repetition} code $C=\{(0,\ldots, 0), (1,\ldots, 1)\}$ of odd length $n$ is a trivial perfect code. Its dual code is the {\em even} code which contains all vectors of length $n$ and even weight (i.e. with an even number of nonzero coordinates). Clearly, these codes can be considered as $\Z_2\Z_2[u]$-linear codes with parameters $(n-2\beta,\beta)$, for all $\beta\in\{0,\ldots, (n-1)/2\}$.

It is well known that the binary linear perfect codes with more than two codewords are:

(1) The binary {\em Hamming} 1-perfect codes of length $n=2^t-1$ ($t\ge 3$), dimension $k=2^t-t-1$ and minimum distance $d=3$.

(2) The binary {\em Golay} 3-perfect code of length $n=23$, dimension $k=12$ and minimum distance $d=7$.

In this section we prove that these codes are $\Z_2\Z_2[u]$-linear codes.

\bigskip

Let $H_t$ be a Hamming code of length $n=2^t-1$, where $t\geq 3$. The dual code $H_t^\perp$ is known as the {\em simplex} code. It is a constant-weight code with all nonzero codewords of weight $2^{t-1}$. A parity-check matrix $M_r$ for $H_t$ (which is a generator matrix for $H_t^\perp$) contains all nonzero column vectors of length $t$.

%
%

\begin{theorem}\label{hamming}
Let $H_t$ be a Hamming code of length $n=2^t-1$. Then,
$H_t$ is a $\Z_2\Z_2[u]$-linear code with parameters $(2^r-1,2^{t-1}-2^{r-1})$, for all $r$ such that $t/2 \leq r\leq t$.
\end{theorem}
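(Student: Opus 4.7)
The plan is to invoke Proposition \ref{involucio} (via Corollary \ref{autgrup}) by exhibiting, for each integer $r$ with $t/2 \leq r \leq t-1$, an involution $\sigma \in Aut(H_t)$ that fixes exactly $\alpha = 2^r - 1$ coordinates; the remaining $n - \alpha = 2^t - 2^r$ coordinates then split into $\beta = 2^{t-1} - 2^{r-1}$ transposed pairs, giving the claimed parameters. The boundary case $r = t$ yields $\beta = 0$ and is immediate: every $\Z_2$-linear code is trivially $\Z_2\Z_2[u]$-linear with parameters $(n, 0)$.

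The key ingredient is the classical isomorphism $Aut(H_t) \cong GL(t, \Z_2)$, where a matrix $A \in GL(t, \Z_2)$ acts on the $n = 2^t - 1$ coordinates of $H_t$ through its natural permutation of the nonzero columns of the parity-check matrix $M_t$. It therefore suffices to find $A \in GL(t, \Z_2)$ with $A^2 = I$ whose fixed space $V_A = \{x \in \Z_2^t : Ax = x\}$ has dimension exactly $r$, since the induced permutation $\sigma$ will then fix the $2^r - 1$ nonzero elements of $V_A$. Setting $N = A + I$, the condition $A^2 = I$ is equivalent over $\Z_2$ to $N^2 = 0$, with $V_A = \ker N$. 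Since $N^2 = 0$ forces $\mathrm{Im}\, N \subseteq \ker N$, we get $t - r = \mathrm{rank}(N) \leq \dim \ker N = r$, which explains exactly why the bound $r \geq t/2$ is needed.

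For the constructive direction, given $r$ in the admissible range pick a $(t - r) \times r$ matrix $B$ over $\Z_2$ of full rank $t - r$ (possible precisely because $t - r \leq r$) and set
$$
A = \begin{pmatrix} I_r & 0 \\ B & I_{t-r} \end{pmatrix}.
$$
A direct block computation gives $A^2 = I$, and since $\ker(A + I)$ consists of vectors $(x_1, x_2)$ with $Bx_1 = 0$, one finds $\dim \ker(A + I) = \dim \ker B + (t - r) = (2r - t) + (t - r) = r$, as required. The only point that needs care is this rank bookkeeping for $B$; once $A$ is in hand, the induced permutation $\sigma$ on the coordinates of $H_t$ provides the involution required by Proposition \ref{involucio}, yielding the $\Z_2\Z_2[u]$ structure with parameters $(2^r - 1, 2^{t-1} - 2^{r-1})$.
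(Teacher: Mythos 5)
Your proof is correct and follows the same overall strategy as the paper: reduce the theorem to Proposition \ref{involucio} by producing, for each admissible $r$, an involution in $Aut(H_t)$ fixing exactly $2^r-1$ coordinates (and treating $r=t$ as the trivial case $\beta=0$). The difference is in how that key ingredient is obtained. The paper simply cites the reference \cite{CPV}, where it is shown that $Aut(H_t)$ contains involutions fixing $2^r-1$ points precisely for $t/2\le r\le t$; you instead make the argument self-contained by working inside $GL(t,\Z_2)\cong Aut(H_t)$ and explicitly constructing a unipotent involution $A=\left(\begin{smallmatrix} I_r & 0\\ B & I_{t-r}\end{smallmatrix}\right)$ with $\dim\ker(A+I)=r$. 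Your rank bookkeeping is right: $A^2=I$ is equivalent to $(A+I)^2=0$, which forces $\mathrm{Im}(A+I)\subseteq\ker(A+I)$ and hence $t-r\le r$, and conversely a full-rank $(t-r)\times r$ block $B$ exists exactly when $t-r\le r$, giving $\dim\ker B=2r-t$ and fixed space of dimension $r$. Since a coordinate $v\neq 0$ is fixed by the induced permutation iff $Av=v$, the involution fixes exactly the $2^r-1$ nonzero vectors of $V_A$ and pairs up the remaining $2^t-2^r$ coordinates, yielding $\beta=2^{t-1}-2^{r-1}$. What your version buys is a constructive, citation-free proof (and, as a bonus, the linear-algebra explanation of why the bound $r\ge t/2$ is sharp); what the paper's version buys is brevity, plus access to the finer counting of such involutions available in \cite{CPV}. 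The only point stated without justification is the identification of $GL(t,\Z_2)$ with $Aut(H_t)$, but in fact you only need the easy embedding direction (every $A\in GL(t,\Z_2)$ permutes the nonzero columns of the parity-check matrix and preserves $H_t$), which is immediate.
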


\begin{proof}
The case $r=t$ corresponds to the trivial case $(\alpha,\beta)$=$(n,0)$.
In \cite{CPV}, it is shown that $Aut(H_t)$ contains involutions fixing $2^r-1$ points for $t/2 \leq r\leq t$. Thus, the statement follows by Proposition \ref{involucio}. 
\end{proof}

\begin{example}
A parity-check matrix for $H_3$ is
$$
M_3=\left(
  \begin{array}{ccccccc}
    0 & 0 & 0 & 1 & 1 & 1 & 1 \\
    0 & 1 & 1 & 0 & 0 & 1 & 1 \\
    1 & 0 & 1 & 0 & 1 & 0 & 1 \\
  \end{array}
\right).
$$

We can take the pairs of coordinates $(4,5)$ and $(6,7)$ as $\Z_2[u]$ coordinates and consider the $\Z_2\Z_2[u]$-additive code $\C$ generated by
$$
\left(
  \begin{array}{ccccc}
    0 & 0 & 0 & u & u \\
    0 & 1 & 1 & 0 & u \\
    1 & 0 & 1 & 1 & 1 \\
  \end{array}
\right).
$$
Note that multiplying $u$ by any row gives the allzero vector or a weight $4$ vector whose binary image is in $H_3^\perp$.
Thus, $\Psi(\C)=H_3^\perp$. Hence, by Corollary \ref{dualigual} and Proposition \ref{dualZ2Zu}, $H_3$ is a $\Z_2\Z_2[u]$-linear code with parameters $(3,2)$. We remark that $H_3$ is also a $\Z_2\Z_4$-linear code with the same parameters. But, according to \cite{Add}, $H_t$ is not $\Z_2\Z_4$-linear for $\beta>0$ and $t>3$.
\end{example}

\begin{example}
Consider the parity-check matrix for $H_4$
$$
M_4=\left(
  \begin{array}{ccccccccccccccc}
    0 & 0 & 0 & 0 & 0 & 0 & 0 & 1 & 1 & 1 & 1 & 1 & 1 & 1 & 1 \\
    0 & 0 & 0 & 1 & 1 & 1 & 1 & 0 & 0 & 1 & 1 & 0 & 0 & 1 & 1 \\
    0 & 1 & 1 & 0 & 0 & 1 & 1 & 0 & 1 & 0 & 1 & 0 & 1 & 0 & 1 \\
    1 & 0 & 1 & 0 & 1 & 0 & 1 & 0 & 0 & 0 & 0 & 1 & 1 & 1 & 1 \\
  \end{array}
\right).
$$
Again, we can take the pairs of coordinates $(8,9)$, $(10,11)$, $(12,13)$ and $(14,15)$ as $\Z_2[u]$ coordinates. Let $\C$ be the $\Z_2\Z_2[u]$-additive code generated by
$$
\left(
  \begin{array}{ccccccccccc}
    0 & 0 & 0 & 0 & 0 & 0 & 0 & u & u & u & u \\
    0 & 0 & 0 & 1 & 1 & 1 & 1 & 0 & u & 0 & u \\
    0 & 1 & 1 & 0 & 0 & 1 & 1 & 1 & 1 & 1 & 1 \\
    1 & 0 & 1 & 0 & 1 & 0 & 1 & 0 & 0 & u & u \\
  \end{array}
\right).
$$

Multiplying any row by $u$ gives the allzero vector or a weight $8$ vector whose binary image is in $H_4^\perp$. Therefore, $\Psi(\C)=H_4^\perp$ and $H_4$ is a $\Z_2\Z_2[u]$-linear code with parameters $(7,4)$. Note that, taking the same pairs of coordinates as quaternary coordinates, it is also true that $H_4^\perp$ is a $\Z_2\Z_4$-linear code, but the $\Z_2\Z_4$-dual code is not a Hamming code. For example, the vector $v=(0,0,0,1,0,0,0,0,0,0,0,0,1,0,1)$ is not orthogonal to the third row of $M_4$. However, $v$ is in the $\Z_2\Z_4$-dual of $H^\perp_4$.

After a permutation of columns, the matrix $M_4$ can be written as
$$
\left(
  \begin{array}{ccccccccccccccc}
    1 & 1 & 0 & 0 & 1 & 0 & 1 & 1 & 1 & 1 & 0 & 0 & 1 & 0 & 0 \\
    0 & 1 & 1 & 0 & 0 & 0 & 1 & 0 & 1 & 1 & 1 & 1 & 0 & 0 & 1 \\
    1 & 1 & 0 & 1 & 0 & 1 & 0 & 1 & 1 & 0 & 1 & 1 & 0 & 0 & 0 \\
    1 & 0 & 1 & 0 & 0 & 1 & 0 & 0 & 1 & 1 & 1 & 0 & 1 & 1 & 0 \\
  \end{array}
\right).
$$

Now, taking the pairs of coordinates $(i,i+1)$ for $i=4,\ldots, 14$ as $\Z_2[u]$ coordinates, we also have that $H_4^\perp$ is the binary image of the code generated by
$$
\left(
  \begin{array}{ccccccccc}
    1 & 1 & 0 & 1 & 1 & u & 1+u & 1 & 0 \\
    0 & 1 & 1 & 0 & 1 & 1 & u & 1+u & 1 \\
    1 & 1 & 0 & 1+u & 1+u & u & 1 & 1+u & 0 \\
    1 & 0 & 1 & 0 & 1+u & 1 & u & 1 & 1+u \\
  \end{array}
\right).
$$
Therefore, $H_4$ is also a $\Z_2\Z_2[u]$-linear code with parameters $(3,6)$.
\end{example}

\begin{corollary}
The extended Hamming code $H'_t$, the dual of a Hamming code $H_t^\perp$ (simplex code), and the dual of an extended Hamming code $(H'_t)^\perp$ (linear Hadamard code) are $\Z_2\Z_2[u]$-linear codes with parameters $(2^r,2^{t-1}-2^{r-1})$, $(2^r-1,2^{t-1}-2^{r-1})$, and $(2^r,2^{t-1}-2^{r-1})$, respectively.
\end{corollary}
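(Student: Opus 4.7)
The plan is to reduce all three assertions to Theorem~\ref{hamming} and Proposition~\ref{dualZ2Zu}, with only one genuinely new ingredient: lifting an involution of $H_t$ to one of the extended Hamming code $H'_t$.

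First I would dispose of the simplex code $H_t^\perp$ immediately. By Theorem~\ref{hamming}, $H_t$ is $\Z_2\Z_2[u]$-linear with parameters $(2^r-1,\,2^{t-1}-2^{r-1})$ for every $r$ with $t/2\le r\le t$. Proposition~\ref{dualZ2Zu} says that the $\Z_2$-dual of any $\Z_2\Z_2[u]$-linear code is $\Z_2\Z_2[u]$-linear with the same parameters, so $H_t^\perp$ inherits the parameters $(2^r-1,\,2^{t-1}-2^{r-1})$.

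Next I would handle the extended Hamming code $H'_t$, which has length $2^t = 2^r + 2(2^{t-1}-2^{r-1})$, matching the target parameters $(\alpha,\beta)=(2^r,\,2^{t-1}-2^{r-1})$. By Theorem~\ref{hamming} (which relies on \cite{CPV}), there exists an involution $\sigma\in Aut(H_t)$ fixing exactly $2^r-1$ coordinates. Every $\tau\in Aut(H_t)$ preserves Hamming weights, hence preserves weight parity, so letting $\tau$ fix the added overall parity-check coordinate produces an automorphism $\tau'\in Aut(H'_t)$. Applying this to our involution yields $\sigma'\in Aut(H'_t)$, which is still of order $2$ and fixes $2^r-1+1=2^r$ coordinates. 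Proposition~\ref{involucio} then gives that $H'_t$ is $\Z_2\Z_2[u]$-linear with parameters $(2^r,\,2^{t-1}-2^{r-1})$.

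Finally, for the linear Hadamard code $(H'_t)^\perp$, I would simply apply Proposition~\ref{dualZ2Zu} once more to the previous case, obtaining the same parameters $(2^r,\,2^{t-1}-2^{r-1})$. The only delicate step is the lifting argument for $H'_t$, and it is routine once one observes that fixing the parity-check coordinate gives a well-defined homomorphism $Aut(H_t)\hookrightarrow Aut(H'_t)$ that preserves orders of elements and increases the number of fixed coordinates by exactly one.
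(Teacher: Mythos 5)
Your proposal is correct and follows essentially the same route as the paper: both reduce everything to Theorem~\ref{hamming} and Proposition~\ref{dualZ2Zu}. The only cosmetic difference is in the extension step, where the paper simply observes that extending a $\Z_2\Z_2[u]$-linear code with parameters $(\alpha,\beta)$ trivially yields one with parameters $(\alpha+1,\beta)$, whereas you lift the involution of $H_t$ to one of $H'_t$ fixing one extra coordinate and reapply Proposition~\ref{involucio} --- two equivalent one-line observations.
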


\begin{demo}
On the one hand, extending a $\Z_2\Z_2[u]$-linear code with parameters $(\alpha, \beta)$ trivially results in a $\Z_2\Z_2[u]$-linear code with parameters $(\alpha+1, \beta)$. On the other hand, by Proposition \ref{dualZ2Zu},
the dual code has the same parameters.
\end{demo}

\begin{theorem}
The binary Golay code $G_{23}$ and the extended binary Golay code $G_{24}$ are $\Z_2\Z_2[u]$-linear codes with parameters $(\alpha,\beta)$. For $\beta > 0$, the parameters are:
\begin{itemize}
\item[(i)] $(0,12)$ or $(8,8)$, for $G_{24}$.
\item[(ii)] $(7,8)$, for $G_{23}$.
\end{itemize}
\end{theorem}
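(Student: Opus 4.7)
The plan is to reduce the problem to Proposition \ref{involucio} and then invoke the well-known structure of the automorphism groups of the Golay codes. By Proposition \ref{involucio}, for a $\Z_2$-linear code $C$ of length $n$, the admissible parameters $(\alpha,\beta)$ with $\beta>0$ are exactly those for which $Aut(C)$ contains an involution with $\alpha$ fixed points and $\beta$ transpositions, subject to $\alpha+2\beta=n$.

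First, I would recall the classical identifications $Aut(G_{24})=M_{24}$ and $Aut(G_{23})=M_{23}$, where $M_{23}$ is realized as the stabilizer in $M_{24}$ of the coordinate that is deleted in passing from $G_{24}$ to $G_{23}$. I would then import the classification of conjugacy classes of involutions in $M_{24}$ (available, e.g., in the ATLAS of finite groups): there are precisely two such classes, with cycle types $1^{8}2^{8}$ and $2^{12}$. Thus every involution in $M_{24}$ fixes either $8$ or $0$ coordinates.

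For $G_{24}$, the two cycle types yield exactly the two parameter pairs $(\alpha,\beta)=(8,8)$ and $(0,12)$ via $\alpha+2\beta=24$, proving both existence and exhaustion. For $G_{23}$, any involution in $M_{23}$ must fix the distinguished coordinate; the $2^{12}$ class therefore cannot contribute, while an involution of type $1^{8}2^{8}$ fixing the distinguished coordinate restricts to the remaining $23$ coordinates with cycle type $1^{7}2^{8}$. This gives the unique admissible pair $(\alpha,\beta)=(7,8)$ for $G_{23}$, again with both existence and exhaustion immediate.

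The main obstacle here is not technical but bibliographic: one must locate a clean reference for the cycle-type classification of involutions in $M_{24}$ and for the identification of $Aut(G_{24})$ with $M_{24}$. Once those inputs are cited, the theorem follows as a two-line application of Proposition \ref{involucio} together with the length constraint $\alpha+2\beta=n$. A secondary, minor point worth verifying is that the constructive side can in principle be exhibited by explicit generator matrices (analogous to the Hamming examples above), but for the statement as written the structural argument via $M_{24}$ suffices.
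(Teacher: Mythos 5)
Your proposal is correct and follows essentially the same route as the paper: identify $Aut(G_{24})$ and $Aut(G_{23})$ with the Mathieu groups and apply Proposition \ref{involucio} to the known involution types ($1^{8}2^{8}$ and $2^{12}$ in $M_{24}$, and hence $1^{7}2^{8}$ in $M_{23}$). The only cosmetic difference is that the paper cites Fraser's thesis for explicit involution counts (43470, 11385, 3795), which additionally yields the number of distinct $\Z_2\Z_2[u]$ structures, whereas you derive the $M_{23}$ case from the $M_{24}$ cycle types via the point-stabilizer realization.
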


\begin{demo}
It is well known that the automorphism groups of $G_{23}$ and $G_{24}$ are the Mathieu groups $M_{23}$ and $M_{24}$, respectively \cite{MacW}. In \cite{Tesi}, it is stated that $M_{24}$ has 43470 fixed-point-free involutions. The remaining involutions of $M_{24}$ are 11385 involutions fixing 8 points. Therefore, by Proposition \ref{involucio}, $G_{24}$ has 43470 $\Z_2\Z_2[u]$ different structures with parameters $(0,12)$ and 11385 with parameters $(8,8)$. For the case of $M_{23}$, it has 3795 involutions, all of them fixing 7 points. Therefore, $G_{23}$ has 3795 $\Z_2\Z_2[u]$ structures with parameters $(7,8)$.
\end{demo}

\section{$\Z_2\Z_2[u]$-linear and $\Z_2\Z_4$-linear codes}\label{Z2Z4}

In this section we prove that any $\Z_2\Z_4$-linear code with parameters $(\alpha,\beta)$ which is also $\Z_2$-linear has a $\Z_2\Z_2[u]$ structure with the same parameters. It is not difficult to see this property using Corollary \ref{autgrup}, however, we give here an independent proof in order to better clarify the relation between both classes of codes.

The following property was stated in \cite{Sole} for vectors over $\Z_4$. Its generalization for vectors over $\Z_2\times\Z_4$ is easy and established in \cite{RankKernel}.

\begin{lemma}\label{sumes}
Let ${\bf x}, {\bf y}\in \Z_2^\alpha\times\Z_4^\beta$. The following identity holds:
$$
\Phi({\bf x}) + \Phi({\bf y})=\Phi({\bf x}+{\bf y}) + \Phi\left(2({\bf x}\star {\bf y})\right),
$$
where $\star$ stands for the coordinate-wise product.
\end{lemma}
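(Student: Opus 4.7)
The plan is to reduce the identity to single coordinates and then check it separately on binary and quaternary positions. Since $\Phi$, addition, and the coordinate-wise product $\star$ all act coordinate by coordinate, and both sides of the identity live in $\Z_2^{\alpha+2\beta}$, it suffices to establish the two one-coordinate versions. This reduction is the main conceptual step; everything else is a short calculation.

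On a binary coordinate $i \le \alpha$ the map $\Phi$ is the identity, so $\Phi({\bf x})_i + \Phi({\bf y})_i = x_i + y_i = \Phi({\bf x}+{\bf y})_i$; multiplication by $2$ annihilates anything in $\Z_2$, so the contribution of the term $\Phi(2({\bf x}\star{\bf y}))$ on this coordinate vanishes and the identity reduces to the tautology $x_i + y_i = x_i + y_i$.

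On a quaternary coordinate I would write $a = 2a_1 + a_0$ and $b = 2b_1 + b_0$ with $a_0,a_1,b_0,b_1 \in \{0,1\}$; the table defining $\phi$ then reads $\phi(a) = (a_1,\, a_0 + a_1)$ with bits taken modulo $2$. Adding $a$ and $b$ in $\Z_4$ produces a carry into the high bit precisely when $a_0 = b_0 = 1$, so a short bitwise computation gives $\phi(a) + \phi(b) - \phi(a+b) = (a_0 b_0,\, a_0 b_0)$. On the other hand, expanding $ab = 4a_1 b_1 + 2(a_0 b_1 + a_1 b_0) + a_0 b_0$ in $\Z$ and reducing modulo $4$ yields $2ab \equiv 2 a_0 b_0 \pmod{4}$, whence $\phi(2ab) = (a_0 b_0,\, a_0 b_0)$. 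The two expressions agree, which settles the quaternary case and hence the lemma.

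The only obstacle, if one can call it that, is bookkeeping the carry bit in the $\Z_4$ addition; as a safety net the algebraic derivation can be replaced by direct inspection of the $16$ pairs $(a,b) \in \Z_4^2$, making the verification completely mechanical at the cost of elegance.
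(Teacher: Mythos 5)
Your proof is correct. Note, however, that the paper does not actually prove this lemma: it is quoted as a known fact, stated in the reference of Hammons et al.\ for vectors over $\Z_4$ and generalized to $\Z_2^\alpha\times\Z_4^\beta$ in the Fern\'andez-C\'ordoba--Pujol--Villanueva paper on rank and kernel, so there is no in-paper argument to compare against. Your coordinate-wise reduction is the standard and natural route: the formula $\phi(a)=(a_1,\,a_0+a_1)$ for $a=2a_1+a_0$ matches the table defining $\phi$, the carry in $\Z_4$-addition is exactly $a_0b_0$, which gives $\phi(a)+\phi(b)-\phi(a+b)=(a_0b_0,a_0b_0)=\phi(2ab)$, and on the binary block the correction term vanishes because $2$ acts as zero there under the paper's definition of scalar multiplication as repeated addition. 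This yields a self-contained elementary verification of a statement the paper leaves to the literature.
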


The next lemma \cite{RankKernel} is a direct consequence.

\begin{lemma}\label{lineal}
If $\C$ is a $\mathbb{Z}_2\mathbb{Z}_4$-additive code, then its binary image $C=\Phi(\C)$ is $\Z_2$-linear if and only if $2({\bf x}\star {\bf y})\in \C$, for all ${\bf x}, {\bf y}\in \C$.
\end{lemma}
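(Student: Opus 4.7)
The plan is to use Lemma \ref{sumes} as the sole engine and apply it in each direction.

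For the ``only if'' direction, I would fix ${\bf x},{\bf y}\in\C$ and use that $\C$ is additive, so $\Phi({\bf x}+{\bf y})\in C$. Since $C$ is assumed $\Z_2$-linear, the sum $\Phi({\bf x})+\Phi({\bf y})$ also lies in $C$. Adding these two binary codewords and invoking Lemma \ref{sumes} gives $\Phi(2({\bf x}\star{\bf y}))=\Phi({\bf x})+\Phi({\bf y})+\Phi({\bf x}+{\bf y})\in C$. Because $\Phi$ is a bijection onto its image and $C=\Phi(\C)$, this forces $2({\bf x}\star{\bf y})\in\C$.

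For the ``if'' direction, take any $c_1,c_2\in C$ and write $c_1=\Phi({\bf x})$, $c_2=\Phi({\bf y})$ for some ${\bf x},{\bf y}\in\C$. Lemma \ref{sumes} gives $c_1+c_2=\Phi({\bf x}+{\bf y})+\Phi(2({\bf x}\star{\bf y}))$. The element ${\bf x}+{\bf y}$ lies in $\C$ by additivity, and $2({\bf x}\star{\bf y})$ lies in $\C$ by hypothesis, so the task is to show that the right-hand side itself belongs to $C$. I would apply Lemma \ref{sumes} a second time to the pair $({\bf x}+{\bf y},\,2({\bf x}\star{\bf y}))$, obtaining $\Phi({\bf x}+{\bf y})+\Phi(2({\bf x}\star{\bf y}))=\Phi(({\bf x}+{\bf y})+2({\bf x}\star{\bf y}))+\Phi\bigl(2(({\bf x}+{\bf y})\star 2({\bf x}\star{\bf y}))\bigr)$. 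The correction term $2(({\bf x}+{\bf y})\star 2({\bf x}\star{\bf y}))$ vanishes because in the $\Z_4$ coordinates it carries a factor of $4$, while in the $\Z_2$ coordinates it carries a factor of $2$; hence the last $\Phi$ is $\Phi(\zero)=\zero$. This leaves $c_1+c_2=\Phi(({\bf x}+{\bf y})+2({\bf x}\star{\bf y}))\in\Phi(\C)=C$, establishing $\Z_2$-linearity.

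The main subtlety is recognising that the iterated ``carry'' term disappears; everything else is bookkeeping from Lemma \ref{sumes} and the additivity of $\C$. Once that observation is made, both implications reduce to a single algebraic identity. I would also note that the hypothesis $2({\bf x}\star{\bf y})\in\C$ need only be verified for ${\bf x},{\bf y}$ ranging over a generating set of $\C$, since the map $({\bf x},{\bf y})\mapsto 2({\bf x}\star{\bf y})$ is bi-additive modulo elements already in $\C$ (the same iterated Lemma \ref{sumes} argument yields the distributivity needed); but this refinement is not required for the statement as given.
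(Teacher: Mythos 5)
Your proof is correct and follows exactly the route the paper intends: the paper gives no explicit proof, merely declaring the lemma a ``direct consequence'' of Lemma \ref{sumes} (citing \cite{RankKernel}), and your argument is precisely that derivation spelled out, including the correct observation that the iterated carry term $2\bigl(({\bf x}+{\bf y})\star 2({\bf x}\star{\bf y})\bigr)$ vanishes because of the factor $4$ in the quaternary coordinates and the doubling in the binary ones. No gaps.
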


Define the map $\theta:\;\Z_2^\alpha\times\Z_4^\beta\;\longrightarrow\;\Z_2^\alpha\times\Z_2[u]^\beta$ such that, for every element $(x_1,\ldots,x_\alpha\mid y_1,\ldots,y_\beta)\in \Z_2^\alpha\times\Z_4^\beta$,
$$
\theta(x_1,\ldots,x_\alpha\mid y_1,\ldots,y_\beta)=(x_1,\ldots,x_\alpha\mid \vartheta(y_1),\ldots,\vartheta(y_\beta)),
$$
where $\vartheta(0)=0;\;\;\vartheta(1)=1;\;\;\vartheta(2)=u;\;\;\vartheta(3)=1+u$. Note that $\theta=\Psi^{-1}\Phi$.

\begin{theorem}\label{Z2Z4Lineals}
If $\C\subseteq \Z_2^\alpha\times\Z_4^\beta$ is a $\Z_2\Z_4$-additive code such that $\Phi(\C)$ is $\Z_2$-linear, then $\C'=\theta(\C)\subseteq \Z_2^\alpha\times\Z_2[u]^\beta$ is a $\Z_2\Z_2[u]$-additive code.
\end{theorem}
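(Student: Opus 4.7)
The strategy is to apply Lemma \ref{equiv} and reduce the claim to two separate closure conditions for $\C' = \theta(\C)$: closure under componentwise addition in $\Z_2^\alpha\times\Z_2[u]^\beta$, and closure under multiplication by the scalar $u$. The hypothesis that $\Phi(\C)$ is $\Z_2$-linear will be used only to establish additive closure, while closure under $u$-multiplication will follow purely from $\C$ being an additive subgroup of $\Z_2^\alpha\times\Z_4^\beta$.

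For the additive closure I would exploit that $\Psi$ restricts, on each $\Z_2[u]$ coordinate, to the group isomorphism $(\Z_2[u],+)\cong(\Z_2^2,+)$ noted in the Preliminaries; coordinate-wise, this makes $\Psi$ a group isomorphism from $(\Z_2^\alpha\times\Z_2[u]^\beta,+)$ to $(\Z_2^{\alpha+2\beta},+)$, as was already used in Section \ref{Duality}. Since $\theta = \Psi^{-1}\Phi$, one has $\Psi(\C') = \Phi(\C)$, which is $\Z_2$-linear by hypothesis. Given $\mathbf{x},\mathbf{y}\in\C'$, additivity of $\Psi$ gives $\Psi(\mathbf{x}+\mathbf{y}) = \Psi(\mathbf{x})+\Psi(\mathbf{y}) \in \Phi(\C) = \Psi(\C')$, and injectivity of $\Psi$ then yields $\mathbf{x}+\mathbf{y}\in\C'$.

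For closure under $u$-multiplication, the plan is to establish the identity
\[
u\,\theta(\mathbf{w}) \;=\; \theta(2\mathbf{w}) \qquad \mbox{for all } \mathbf{w}\in\Z_2^\alpha\times\Z_4^\beta.
\]
On the binary coordinates both sides vanish, because $\pi(u)=0$ in the $\Z_2[u]$-module action and $2x_i=0$ in $\Z_2$. On the $\Z_2[u]$ coordinates the identity reduces to the pointwise check $u\,\vartheta(y) = \vartheta(2y)$ for $y\in\{0,1,2,3\}$; this is a quick four-case verification, with both expressions equal to $0$ when $y$ is even and equal to $u$ when $y$ is odd. Once the identity is in hand, for any $\mathbf{w}\in\C$ the additivity of $\C$ gives $2\mathbf{w}=\mathbf{w}+\mathbf{w}\in\C$, whence $u\,\theta(\mathbf{w}) = \theta(2\mathbf{w})\in\theta(\C)=\C'$.

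The central observation powering the argument is that $\theta$ intertwines the $\Z_4$-scalar action of $2$ with the $\Z_2[u]$-scalar action of $u$; spotting this identity is the main step, and neither Lemma \ref{sumes} nor Lemma \ref{lineal} is strictly needed under this approach. The asymmetry between the two closure checks — addition genuinely uses the $\Z_2$-linearity of $\Phi(\C)$, multiplication by $u$ uses only additivity of $\C$ — is consistent with the fact, emphasized in the introduction, that every $\Z_2\Z_2[u]$-linear code is $\Z_2$-linear, whereas $\Z_2\Z_4$-linear codes need not be.
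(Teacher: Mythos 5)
Your proposal is correct. The $u$-closure half is essentially identical to the paper's: the paper also observes $u\,{\bf x}=\theta\bigl(2\theta^{-1}({\bf x})\bigr)$ and concludes from $2\theta^{-1}({\bf x})\in\C$; you merely spell out the coordinate-wise verification of the intertwining identity, which the paper leaves implicit. The additive-closure half is where you genuinely diverge. The paper invokes Lemma \ref{sumes} to write $\Psi({\bf x})+\Psi({\bf y})$ as $\Phi$ of the explicit element $\theta^{-1}({\bf x})+\theta^{-1}({\bf y})+2\bigl(\theta^{-1}({\bf x})\star\theta^{-1}({\bf y})\bigr)$, and then needs Lemma \ref{lineal} to certify that the correction term $2({\bf x}\star{\bf y})$ lies in $\C$. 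You instead use only that $\Psi$ is a group isomorphism onto $(\Z_2^{\alpha+2\beta},+)$, so that $\Psi(\C')=\Phi(\C)$ is closed under addition and the sum pulls back into $\C'$ by injectivity; this bypasses both lemmas and is shorter and cleaner. What the paper's longer route buys is an explicit description of \emph{which} element of $\C$ maps to ${\bf x}+{\bf y}$, making visible exactly how the failure of $\Z_2$-linearity of $\Phi$ is absorbed by the hypothesis --- the author states that the detour is deliberate, ``in order to better clarify the relation between both classes of codes.'' Your argument proves the same theorem with less machinery but loses that structural information.
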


\begin{demo}
We use the characterization of Lemma \ref{equiv} to prove the statement.

Given ${\bf x}\in \C'$, we need to prove that $u {\bf x}\in \C'$. Note that $u{\bf x}=\theta(2\theta^{-1}({\bf x}))$ which is in $\C'$.

Next, we want to prove that ${\bf x}+{\bf y}\in\C'$, for all ${\bf x}, {\bf y}\in \C'$. Clearly,
\begin{equation}\label{primera}
{\bf x}+{\bf y} = \Psi^{-1}\left(\Psi({\bf x})+\Psi({\bf y})\right).
\end{equation}

By Lemma \ref{sumes}, we have
\begin{equation}\label{segona}
\Psi({\bf x})+\Psi({\bf y})=\Phi\left(\Phi^{-1}\left(\Psi({\bf x})\right) + \Phi^{-1}\left(\Psi({\bf y})\right) + 2\left(\Phi^{-1}\left(\Psi({\bf x})\right)\star \Phi^{-1}\left(\Psi({\bf y})\right)\right)\right).
\end{equation}

Combining Equations \ref{primera} and \ref{segona}, we obtain
$$
{\bf x}+{\bf y}=\theta\left(\theta^{-1}({\bf x})+\theta^{-1}({\bf y})+2(\theta^{-1}({\bf x})\star\theta^{-1}({\bf y}))\right).
$$
Since $\Phi(\C)$ is $\Z_2$-linear, we have that $2\left(\theta^{-1}({\bf x})\star\theta^{-1}({\bf y})\right)\in \C$, by Lemma \ref{lineal}. It follows that ${\bf x}+{\bf y}\in\C'$.
\end{demo}

However, there are $\Z_2\Z_2[u]$-linear codes which are not $\Z_2\Z_4$-linear, as we can see in the following example.

\begin{example}
Let $\D\subset \Z_2[u]^4$ be the code generated by ${\bf x}=(1,1,1,u)$ and ${\bf y}=(1,u,1,1)$. We can see that
$$
\theta\left(\theta^{-1}({\bf x}) + \theta^{-1}({\bf y})\right)=\theta(2,3,2,3)=(u,1+u,u,1+u).
$$
It is easy to check that the equation $\lambda {\bf x}+\mu {\bf y}= (u,1+u,u,1+u)$ has no solution for $\lambda,\mu\in\Z_2[u]$. Therefore $\C=\theta^{-1}(\D)$ is not a $\Z_2\Z_4$-additive code.
\end{example}

It is worth noting that if $\C$ is a $\Z_2\Z_4$-additive code such that $\Phi(\C)$ is $\Z_2$-linear, it is not yet true that $\Phi(\C^\perp)=\Phi(\C)^\perp$ as we can see in the next example.

\begin{example}
Let $\C\subset \Z_4^3$ be the code generated by ${\bf x}=(1,1,1)$ and ${\bf y}=(0,2,3)$. It can be easily verified that $\Phi(\C)$ is $\Z_2$-linear. However, we have that $(1,1,2)\in\C^\perp$, but $\Phi(1,1,2)=(0,1,0,1,1,1)\notin \Phi(\C)^\perp$.
\end{example}

\section{Conclusions}\label{conclusions}

From Corollary \ref{autgrup}, it seems that $\Z_2\Z_4$-linear codes form a wide class of $\Z_2$-linear codes. Moreover, the equivalence between $\Z_2\Z_2[u]$-duality and $\Z_2$-duality (Corollary \ref{dualigual}), suggests that $\Z_2\Z_2[u]$-linear codes have no meaningful additional properties to those of $\Z_2$-linear codes. However, the partition of the coordinate set into two subsets (the $\Z_2$ and the $\Z_2[u]$ coordinates) open some possible lines of research. In particular, cyclic $\Z_2\Z_2[u]$-linear codes are studied in \cite{Abu,ASA,SrBh}.

Another interesting point is the search for $\Z_2$-linear non-$\Z_2\Z_2[u]$-linear codes. In other words, the search for binary linear codes with automorphism group of odd order, according to Corollary \ref{autgrup}. 

\section*{Acknowledgements}
The author thanks Prof. Josep Rif\`{a} for valuable comments on automorphism groups of linear codes.

%









\begin{thebibliography}{1}

\bibitem{AbSi}
I. Aydogdu, T. Abualrub, I. Siap, ``On $\mathbb{Z}_2\mathbb{Z}_2[u]$-additive codes," {\em Int. J. Comput. Math.}, vol. 9, pp. 1806-1814, 2015.

\bibitem{Abu}
T. Abualrub, I. Siap and H. Aydin, ``$\mathbb{Z}_2\mathbb{Z}_4$-additive cyclic
codes,"
{\em IEEE Trans. Inform. Theory}, vol. 60, pp. 1508-1514, 2014.

\bibitem{ASA}
T. Abualrub, I. Siap and I. Aydogdu,
``$\Z_2(\Z_2+u\Z_2)$-linear cyclic codes," In {\em Procs. Int. Multiconference of Engineers and Computing Science}, vol. II, 2014, Hong Kong.


\bibitem{AddDual}
J. Borges, C. Fern\'{a}ndez-C\'{o}rdoba, J. Pujol, J. Rif\`{a} and M.
Villanueva, ``$\mathbb{Z}_2\mathbb{Z}_4$-linear codes: generator matrices and
duality,"
\emph{Designs, Codes and Cryptography}, vol. 54, pp. 167-179, 2010.

\bibitem{Z2Z4Dual}
J. Borges, C. Fern\'{a}ndez-C\'{o}rdoba and R. Ten-Valls,
``$\mathbb{Z}_2\mathbb{Z}_4$-additive cyclic codes, generator polynomials and
dual codes,"
{\em IEEE Trans. Inform. Theory}, vol. 62, pp. 6348-6354, 2016.

\bibitem{Add} J. Borges and J. Rif\`{a}, ``A characterization of 1-perfect
additive codes," {\em IEEE Trans. Inform. Theory}, vol. 45,
pp. 1688-1697, 1999.

\bibitem{atlas}
J.H. Conway, R.T. Curtis, S.P. Norton, R.A. Parker, R.A. Wilson, \emph{ATLAS of finite groups}, Oxford Univ. Press, 1985.

\bibitem{CPV}
C. Fernández-Córdoba, J. Pujol, M. Villanueva, ``Involutions in binary perfect codes,"
{\em IEEE Trans. Inform. Theory}, vol. 57, pp. 5926-5932, 2011.

\bibitem{RankKernel}
C. Fernández-Córdoba, J. Pujol, M. Villanueva, ``$\mathbb{Z}_2\mathbb{Z}_4$-linear codes: rank and kernel,"
\emph{Designs, Codes and Cryptography}, vol. 56, pp. 43-59, 2010.

\bibitem{Tesi}
R.E.J. Fraser, \emph{The involutions of the Mathieu groups}, MA Thesis, Univ. of British Columbia, 1966.


\bibitem{Sole} A.R. Hammons, P.V. Kumar, A.R. Calderbank, N.J.A. Sloane and
P. Sol\'{e}, ``The $Z_4$-linearity of kerdock, preparata, goethals and
related codes," {\em IEEE Trans. on Information Theory}, vol. 40,
pp. 301-319, 1994.


\bibitem{MacW}
F.J. MacWilliams and N.J.A. Sloane, \emph{The Theory of
  Error-Correcting Codes}, North-Holland Publishing Company, 1977.

\bibitem{PuRi}
J. Pujol and J. Rif\`{a}, ``Translation-invariant propelinear codes,"
{\em IEEE Trans. Inform. Theory}, vol. 43, pp. 590-598, 1997.

\bibitem{SrBh}
B. Srinivasulu, Maheshanand Bhaintwal, ``$\Z_2(\Z_2+u\Z_2)$-additive cyclic codes and their duals,"
{\em Discr. Math. Alg. App.}, vol. 8, 19 pages, 2016.

\end{thebibliography}
\end{document}